\newtheorem{theorem}{Theorem}
\newtheorem{lemma}[theorem]{Lemma} 
\newtheorem{observation}[theorem]{Observation} 
\theoremstyle{definition}
\newtheorem{example}[theorem]{Example}
\theoremstyle{remark}
\newtheorem{remark}[theorem]{Remark}
\begin{document}

\title{On the Complexity of Instationary Gas Flows}

\author[mg]{Martin Gro\ss}
\ead{martin.gross@mailbox.org}
\address[mg]{University of Waterloo, 200 University Ave W, Waterloo, ON N2L 3G1, Canada}

\author[mep]{Marc~E.~Pfetsch}
\ead{pfetsch@opt.tu-darmstadt.de}
\address[mep]{Research Group Optimization, TU Darmstadt, Dolivostr.\ 15, 64293 Darmstadt, Germany}

\author[ms]{Martin Skutella}
\ead{martin.skutella@tu-berlin.de}
\address[ms]{Technische Universit\"at Berlin, Stra\ss{}e des 17. Juni 136, 10623 Berlin, Germany}

\begin{abstract}
We study a simplistic model of instationary gas flows consisting
of a sequence of~$k$ stationary gas flows. We present efficiently
solvable cases and NP-hardness results, establishing complexity
gaps between stationary and instationary gas flows (already
for~$k=2$) as well as between instationary gas $s$-$t$-flows and
instationary gas $b$-flows.
\end{abstract}

\begin{keyword}
Gas transport network \sep instationary gas flow \sep time-dependent flow \sep complexity
\sep NP-hardness
\end{keyword}

\maketitle

\section{Introduction}

This paper studies the algorithmic complexity of time-varying flows
in gas transport networks. In the gas transport literature, these
flows are called \emph{instationary} in contrast to \emph{stationary}
gas flows that describe a steady state situation. This paper presents
efficiently solvable problems and 
identifies complexity gaps between stationary and instationary
gas flows, as well as between instationary gas flows with a single
source/sink and multi-terminal instationary gas flows. Our ultimate
goal is to contribute to a better understanding of the particular
difficulty of instationary gas flows. To this end, we introduce a 
simple model of instationary gas flows in Sect.~\ref{sec:model},
present an efficiently solvable instationary gas flow problem in
Sect.~\ref{sec:max-2-stage-flow}, examples of more complicated
scenarios in Sect.~\ref{sec:examples}, and finally an NP-hardness
result in Sect.~\ref{sec:complexity}.

\section{Stationary Gas Flows}
\label{sec:stationary}

Before turning to the more general case of instationary gas flows,
we introduce some basic facts
about stationary gas flows. In contrast to classical network flows
where, within given capacity bounds, flow may be distributed
throughout a network ad libitum, gas flows are governed by the laws
of physics. Essentially, in a gas network the (stationary) flow
along an arc (pipeline) is uniquely determined by the pressures
at the two endpoints of the arc. For an in-depth treatment of
(stationary) flows in gas networks we refer to the recent book~\cite{Koch_et_al:2015}.
The simplest and most widely adapted model for stationary gas flows is 
Weymouth's equation~\cite{Weymouth1912}: For an arc~$\arc=(\node,\otherNode)$,
the flow value~$\flow_\arc$ along~$\arc$ satisfies
\begin{align}
\beta_\arc\flow_\arc\abs{\flow_\arc} = \pot_\node - \pot_\otherNode,
\label{eq:flowpotential}
\end{align}
where the node potentials~$\pot_\node={p_\node}^2$ and $\pot_\otherNode={p_\otherNode}^2$
are the squared pressures at nodes~$\node$ and~$\otherNode$,
respectively, and~$\beta_\arc>0$ is a given constant specifying
the resistance of arc~$\arc$. Here, a negative flow value~$\flow_\arc$
on arc~$\arc=(\node,\otherNode)$ represents flow in the opposite
direction from node~$\otherNode$ to node~$\node$. This stationary gas flow
model forms the basis of this paper.

Consider a directed graph~$\graph$ with node set~$\nodes$ and arc
set~$\arcs$. For given node balances~$b\in\R^\nodes$ with
$\sum_{\otherNode\in\nodes} b_\otherNode=0$, a stationary gas flow satisfying
supplies and demands given by~$b$ can be computed by solving the
following convex min-cost $b$-flow problem~\cite{CCHKB78,Maugis1977}
\begin{align}\label{eq:ConvexProblem}
\begin{split}
\min&\quad\sum_{\arc\in\arcs}\frac{\beta_\arc}3\abs{\flow_\arc}^3\\
\text{s.t.}&\sum_{\arc\in\outArcs{\otherNode}}\flow_\arc
-\sum_{\arc\in\inArcs{\otherNode}}\flow_\arc=b_\otherNode
\quad\forall\otherNode\in\nodes,
\end{split}
\end{align}
with corresponding dual (strong duality holds)
\begin{equation}\label{eq:LagrangeDual}
\max_\pot\biggl(\sum_{\node\in\nodes}b_\node\pot_\node 
-2\sum_{(\node,\otherNode)\in\arcs} 
\frac{\abs{\pot_\node-\pot_\otherNode}^{3/2}}{3\sqrt{\beta_{(\node,\otherNode)}}}\biggr).
\end{equation}
The dual variables yield the node potentials
in~\eqref{eq:flowpotential}. These node potentials are
unique up to translation by an arbitrary value. The
problems~\eqref{eq:ConvexProblem} and~\eqref{eq:LagrangeDual} can be
solved efficiently within arbitrary precision.

Throughout this paper, we assume that there are uniform bounds
on all node potentials given by an interval~$[\pot_{\min},\pot_{\max}]$.
A stationary gas flow~$x$ with corresponding node
potential~$\pot\in\R^\nodes$ is \emph{feasible} if
$\pot_{\min}\leq\pot_\otherNode\leq\pot_{\max}$ for all~$\otherNode\in\nodes$.
Before introducing our model of instationary gas flows in the next section,
we state an important theorem on stationary gas flows, which essentially follows
from the work of Calvert and Keady~\cite{CalvertKeady1993}
(see also~\cite{GrossPfetschScheweEtAl2017}), and for which we give
a short proof for the sake of completeness.

\begin{theorem}[\cite{CalvertKeady1993}]\label{thm:Braess}
In a network with source~$s$, sink~$t$,
and potential interval~$[\pot_{\min},\pot_{\max}]$, the value of a maximal
feasible stationary gas $s$-$t$-flow cannot
be increased by increasing arc resistances~$\beta_\arc$, $\arc\in\arcs$. 
\end{theorem}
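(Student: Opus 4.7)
The plan is to reduce the theorem to a one-dimensional monotonicity statement: the ``effective resistance'' $\Phi(\beta) := \pot_s - \pot_t$ of the optimal unit $s$-$t$-flow is nondecreasing in each arc resistance $\beta_\arc$. I will combine this with two supporting claims---(a)~that feasibility of an $s$-$t$-flow amounts, after translating potentials, to the single scalar inequality $\pot_s - \pot_t \leq \pot_{\max} - \pot_{\min}$, and (b)~that for the optimal $s$-$t$-flow of value~$F$ one has $\pot_s - \pot_t = F^2 \Phi(\beta)$. Together these give $F^* = \sqrt{(\pot_{\max}-\pot_{\min})/\Phi(\beta)}$ for the maximum feasible flow value, which is then manifestly nonincreasing in~$\beta$.

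For (a), the plan is to show that the extreme node potentials are attained at $s$ and $t$, via a standard cut argument: if $S := \{\otherNode \in \nodes : \pot_\otherNode > \pot_s\}$ were nonempty and adjacent to $\nodes\setminus S$, then on the one hand $s,t \notin S$ would force zero net flow out of $S$ by conservation, while on the other hand Weymouth's equation~\eqref{eq:flowpotential} directs the flow across every arc of~$\partial S$ from the higher-potential ($S$-)side outward, giving strictly positive net outflow. This contradiction---together with the remark that isolated components carry no flow and may have their potentials freely translated into the feasible range---yields $\pot_\otherNode \leq \pot_s$ for all~$\otherNode$, and analogously $\pot_\otherNode \geq \pot_t$. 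Hence, after translating $\pot$ so that $\pot_s = \pot_{\max}$, the bounds $\pot_{\min} \leq \pot_\otherNode \leq \pot_{\max}$ collapse to $\pot_s - \pot_t \leq \pot_{\max} - \pot_{\min}$.

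For (b) and the monotonicity of~$\Phi$, I will invoke the $3$-homogeneity of the cubic cost in~\eqref{eq:ConvexProblem}: scaling the optimal unit flow by~$F$ gives the optimal flow of value~$F$, and by~\eqref{eq:flowpotential} the corresponding potential differences scale by~$F^2$, so $\pot_s - \pot_t = F^2 \Phi(\beta)$. For the monotonicity, multiplying~\eqref{eq:flowpotential} by $\flow_\arc^*$, summing over $\arc \in \arcs$, and applying flow conservation with $b_s = 1$ and $b_t = -1$ produces
\[
\Phi(\beta)
= \sum_{\otherNode\in\nodes} b_\otherNode \pot_\otherNode^*
= \sum_{(\node,\otherNode)\in\arcs} \flow_\arc^* (\pot_\node^* - \pot_\otherNode^*)
= \sum_{\arc\in\arcs} \beta_\arc \abs{\flow_\arc^*}^3
= 3\,W(1,\beta),
\]
where $W(1,\beta)$ denotes the optimal value of~\eqref{eq:ConvexProblem} with $b_s = 1$, $b_t = -1$. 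Since for each fixed feasible~$\flow$ the objective of~\eqref{eq:ConvexProblem} is pointwise nondecreasing in~$\beta$, so is its minimum $W(1,\beta)$, and hence $\Phi(\beta)$. The cut argument in the previous paragraph is the only nontrivial ingredient; the remainder is the scaling identity above together with the trivial monotonicity of a minimum in its cost coefficients.
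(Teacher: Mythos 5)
Your argument is correct and, at its core, rests on the same identity as the paper's proof: the potential drop $\pot^*_s-\pot^*_t$ of the stationary gas $s$-$t$-flow of value~$B$ equals $3z^*/B$, where $z^*$ is the optimal value of the convex program~\eqref{eq:ConvexProblem}, and $z^*$ is trivially nondecreasing in the resistances because the feasible region of~\eqref{eq:ConvexProblem} does not depend on~$\beta$. The difference is in how the identity and its consequences are obtained. The paper evaluates the dual~\eqref{eq:LagrangeDual} at the optimum and invokes strong duality, whereas you multiply Weymouth's equation~\eqref{eq:flowpotential} by $\flow^*_\arc$ and sum --- a direct primal ``energy'' computation that avoids duality altogether. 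More importantly, you make explicit two steps that the paper only asserts: the homogeneity/scaling argument giving $\pot_s-\pot_t=F^2\Phi(\beta)$ (which is the precise reason the drop is increasing in the flow value, and which also justifies restricting attention to the min-cost flow, since by strict convexity the stationary gas flow of value~$F$ \emph{is} $F$ times the optimal unit flow), and the cut argument showing that the extreme potentials are attained at $s$ and $t$, so that feasibility collapses to the scalar inequality $\pot_s-\pot_t\le\pot_{\max}-\pot_{\min}$. These additions make your version more self-contained than the published one. One small patch is needed in the cut argument: you assume $t\notin S$ for $S=\{\otherNode\in\nodes:\pot_\otherNode>\pot_s\}$, which presupposes $\pot_t\le\pot_s$. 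This does hold --- your own identity $\sum_{\arc\in\arcs}\beta_\arc\abs{\flow^*_\arc}^3=B\,(\pot^*_s-\pot^*_t)\ge0$ delivers it --- but it should be established before the cut argument is run; alternatively, take $S=\{\otherNode\in\nodes:\pot_\otherNode>\max(\pot_s,\pot_t)\}$, for which $s,t\notin S$ holds by definition and the same contradiction goes through.
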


\begin{proof}
For a fixed $s$-$t$-flow value~$B=b_s=-b_t$ (and $b_\otherNode=0$
for~$\otherNode\in\nodes\setminus\{s,t\}$), consider the primal
problem~\eqref{eq:ConvexProblem} and the dual
problem~\eqref{eq:LagrangeDual}. Due to~$\eqref{eq:ConvexProblem}$,
the optimal value~$z^*$ of these two problems is an increasing
function of the arc resistances~$\beta_\arc$, $\arc\in\arcs$. Then,
by combining~\eqref{eq:LagrangeDual},~\eqref{eq:flowpotential},
and~\eqref{eq:ConvexProblem}, we obtain the following for the
optimal solution $(\flow^*,\pot^*)$:
\begin{align*}
z^* \!&=\! \sum_{\otherNode \in \nodes} b_\otherNode\, \pot^*_\otherNode 
- 2 \sum_{\arc \in \arcs} \frac{\beta_a}{3}|\flow^*_\arc|^3
\!=\! B\, (\pot^*_s-\pot^*_t) - 2z^*.
\end{align*}
For fixed $s$-$t$-flow value~$B$, the difference of
potentials at~$s$ and~$t$ is proportional to~$z^*$, more precisely $\pot^*_s-\pot^*_t= 3z^*/B$,
and thus an increasing function of the arc resistances~$\beta_\arc$,
$\arc\in\arcs$. Finally, the difference of potentials at~$s$ and~$t$
is also an increasing function of flow value~$B$ and bounded
by~$\pot_{\max}-\pot_{\min}$ where the maximum flow value is attained.
\end{proof}

\section{A Simple Instationary Gas Flow Model}
\label{sec:model}

We introduce a model of instationary gas flows that,
while being simple enough to allow for a theoretical analysis, 
still captures essential characteristics and exhibits interesting
properties. In particular, we prove meaningful results that
constitute an interesting first step in explaining the increased
difficulty of instationary versus stationary gas flows.

For $k\in\Z_{>0}$, a \emph{$k$-stage gas flow}~$\flow$ is a $k$-tuple
$(\flow^1,\dots,\flow^k)$ of stationary gas flows (where we interpret
$\flow^1,\dots,\flow^k$ as a temporal succession). If~$\flow^i$
satisfies supplies and demands~$b^i\in\R^\nodes$, $i=1,\dots,k$,
then~$\flow$ in total satisfies supplies and demands
$b=b^1+\cdots+b^k\in\R^\nodes$ and is called \emph{$k$-stage gas $b$-flow}.
For two distinguished nodes $s,t\in\nodes$,~$\flow$ is a
\emph{$k$-stage gas $s$-$t$-flow} of \emph{value}~$q$ if it satisfies
supplies and demands~$b\in\R^\nodes$ with $b_s=-b_t=q$ and
$b_\otherNode=0$ for $\otherNode\in\nodes\setminus\{s,t\}$.
A $k$-stage gas flow~$\flow$ is called \emph{stationary} if~$\flow^1=\dots=\flow^k$,
otherwise~$x$ is called \emph{instationary}. Finally, a
$k$-stage gas flow~$\flow$ is \emph{feasible} if
$\flow^1,\dots,\flow^k$ are feasible stationary gas flows.

\begin{remark}\label{remark:model}
Notice that, in marked contrast to actual gas transport, in our model
there is no correlation between consecutive flows~$x^i$ and~$x^{i+1}$.
Moreover, the model allows to arbitrarily buffer or borrow
flow in each node (\ie flow may be withdrawn or
injected at each node) at each stage as long as the accumulated node
balances~$b^1+\cdots+b^k$ add up to the desired~$b$ 
(cp.~examples in Sect.~\ref{sec:examples} below).
\end{remark}

We study the following two algorithmic problems for~$k\in\Z_{>0}$:
first, the \emph{maximum $k$-stage gas $s$-$t$-flow problem},
whose input is a network~$\graph$ with source~$s\in\nodes$,
sink~$t\in\nodes$, and interval $[\pot_{\min},\pot_{\max}]$,
and the task is to find a feasible $k$-stage gas $s$-$t$-flow
of maximum value; second, the \emph{$k$-stage gas $b$-flow problem},
whose input is a network~$\graph$ with supplies and
demands~$b\in\R^\nodes$, as well as interval~$[\pot_{\min},\pot_{\max}]$,
and the task here is to find a feasible $k$-stage gas $b$-flow.

\section{\texorpdfstring{Maximum $2$-Stage Gas $s$-$t$-Flows}{Maximum 2-Stage Gas s-t-Flows}}
\label{sec:max-2-stage-flow}

We first show that there exists an efficiently computable stationary
solution of the maximum $2$-stage gas $s$-$t$-flow problem.

\begin{theorem}\label{thm:max-2-stage-flow}
Taking two copies of the maximum feasible stationary
gas $s$-$t$-flow yields an optimal solution to the maximum $2$-stage gas $s$-$t$-flow problem.
\end{theorem}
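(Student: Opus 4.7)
The lower bound is immediate: letting $x^*$ denote the maximum feasible stationary gas $s$-$t$-flow (of value $B^*$), the pair $(x^*, x^*)$ is a feasible $2$-stage gas $s$-$t$-flow of value $2B^*$. The task is the matching upper bound: every feasible $2$-stage gas $s$-$t$-flow $(x^1, x^2)$ of value $q := b^1_s + b^2_s$ satisfies $q \leq 2B^*$.

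The plan is to exploit the averaged balance $\bar b := \tfrac{1}{2}(b^1 + b^2)$, which is the $s$-$t$-balance of value $q/2$, and to show that the unique stationary gas flow $\tilde x$ with balance $\bar b$ is feasible. Once this is done, the definition of $B^*$ gives $q/2 \leq B^*$, hence $q \leq 2B^*$. To relate $\tilde x$ to $(x^1, x^2)$, observe that $\bar x := \tfrac{1}{2}(x^1 + x^2)$ satisfies flow conservation with balance $\bar b$ (although it need not satisfy Weymouth's equation~\eqref{eq:flowpotential}), and by convexity of $y \mapsto |y|^3$ its cost is at most $\tfrac{1}{2}(z^{*1} + z^{*2})$, where $z^{*i} := \sum_a (\beta_a/3)|x^i_a|^3$. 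Since $\tilde x$ minimizes the cost for balance $\bar b$ by~\eqref{eq:ConvexProblem}, this gives $z^*(\bar b) \leq \tfrac{1}{2}(z^{*1} + z^{*2})$. The identity $\pot^*_s - \pot^*_t = 3z^*/B$ from the proof of Theorem~\ref{thm:Braess}, applied to $\bar b$ (an $s$-$t$-balance of value $q/2$), yields $\tilde\pot_s - \tilde\pot_t = 6 z^*(\bar b)/q$; feasibility of $\tilde x$---namely $\tilde\pot_s - \tilde\pot_t \leq \pot_{\max} - \pot_{\min}$---thus reduces to the estimate $z^{*1} + z^{*2} \leq q(\pot_{\max} - \pot_{\min})/3$.

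The hard part is establishing this cost estimate. Applying the identity $3z^{*i} = \sum_v b^i_v \pot^i_v$ (also from the proof of Theorem~\ref{thm:Braess}) together with $\pot^i_v \in [\pot_{\min}, \pot_{\max}]$ yields only $z^{*i} \leq (\pot_{\max} - \pot_{\min}) \cdot S^i/3$, where $S^i := \sum_{v:\ b^i_v > 0} b^i_v$ is the total supply in stage $i$; this is too loose because intermediate buffering can force $S^1 + S^2$ to exceed $q$. To close this gap I would pair the above with the lower bound $z^{*1} + z^{*2} \geq 2z^*(\bar b)$ (coming from convexity of $z^*$ in $b$ together with its cubic homogeneity $z^*(\lambda b) = |\lambda|^3 z^*(b)$), and exploit the per-arc Weymouth relation $\beta_a (x^i_a)^2 = |\pot^i_v - \pot^i_w|$ summed along $s$-$t$-paths. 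In the single-path case $s$-$v$-$t$ the strategy plays out cleanly: the stage-wise feasibility constraints $(x^i_{sv})^2 + (x^i_{vt})^2 \leq \pot_{\max} - \pot_{\min}$ for $i = 1, 2$, combined with a Cauchy--Schwarz estimate on the combined flow $x^1 + x^2$ (a classical $s$-$t$-flow of value $q$), deliver $q^2 \leq 2(\pot_{\max} - \pot_{\min}) = 4(B^*)^2$ directly. Extending this reasoning to general networks---presumably via an $s$-$t$-path decomposition of each stage that respects the associated potential---is the main technical step.
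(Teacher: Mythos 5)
Your reduction of the upper bound to the estimate $z^{*1}+z^{*2}\leq q(\pot_{\max}-\pot_{\min})/3$ is where the argument breaks: that estimate is false. Take the path $s\to u\to t$ with $\beta_a=1$ on both arcs and $[\pot_{\min},\pot_{\max}]=[0,1]$. With potentials $\pot^1=(1,0,0)$ and $\pot^2=(1,1,0)$ on $(s,u,t)$ one gets a feasible $2$-stage gas $s$-$t$-flow of value $q=1$ (stage~1 parks one unit at $u$, stage~2 forwards it), and $z^{*1}=z^{*2}=\tfrac13$, so $z^{*1}+z^{*2}=\tfrac23>\tfrac13=q(\pot_{\max}-\pot_{\min})/3$. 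The loss occurs exactly where you suspected: with buffering, $3z^{*i}=\sum_v b^i_v\pot^i_v$ involves the full intermediate supplies $S^i$, which are not controlled by $q$; but in addition the preceding step $z^*(\bar b)\leq\tfrac12(z^{*1}+z^{*2})$ (convexity applied arc-wise to $\abs{y}^3$) is itself already too lossy --- in the example above it gives $z^*(\bar b)\leq\tfrac13$ while the conclusion needs $z^*(\bar b)\leq\tfrac16$. So the chain cannot be repaired by a better bound on $z^{*1}+z^{*2}$ alone. Your fallback via Cauchy--Schwarz on a single path is fine, but the extension ``via an $s$-$t$-path decomposition of each stage'' is precisely the part that does not obviously survive buffering (the stage flows are not $s$-$t$-flows and need not decompose into $s$-$t$-paths), and you leave it open. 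As it stands the proof has a genuine gap.

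For comparison, the paper avoids the cost functional entirely and works with the \emph{potentials} rather than the balances: it averages $\bar\pot=\tfrac12(\pot^1+\pot^2)$ (automatically in $[\pot_{\min},\pot_{\max}]$) and uses concavity of $\sigma\mapsto\sgn(\sigma)\sqrt{\abs{\sigma}}$ to show that the stationary gas flow $\bar\flow$ induced by $\bar\pot$ satisfies $\sgn(\bar\flow_a)=\sgn(\tilde\flow_a)$ and $\abs{\bar\flow_a}\geq\abs{\tilde\flow_a}$ for the averaged flow $\tilde\flow=\tfrac12(\flow^1+\flow^2)$. Scaling each resistance up to $\tilde\beta_a=\beta_a\bar\flow_a^2/\tilde\flow_a^2\geq\beta_a$ makes $\tilde\flow$ a feasible stationary gas $s$-$t$-flow of value $q/2$ in the modified network, and the monotonicity statement of Theorem~\ref{thm:Braess} then transfers the bound back to the original resistances. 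If you want to salvage your route, the missing ingredient is essentially this same concavity fact; the identity $\pot^*_s-\pot^*_t=3z^*/B$ alone does not see the cancellation between $b^1$ and $b^2$ at intermediate nodes.
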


In order to prove the theorem, we consider an arbitrary feasible
$2$-stage gas $s$-$t$-flow $(\flow^1,\flow^2)$ with corresponding
node potentials~$\pot^1,\pot^2\in\R^\nodes$. By definition, the
flow~$\tilde{\flow} \define \frac12(\flow^1+\flow^2)$ is an $s$-$t$-flow
(not necessarily a stationary gas flow, though), and the value of the
feasible $2$-stage gas $s$-$t$-flow $(\flow^1,\flow^2)$ is exactly
twice the value of~$\tilde{\flow}$.

\begin{lemma}\label{lem:average-potential-flow}
The node potentials $\bar\pot \define \frac12(\pot^1+\pot^2)$ induce a
feasible stationary gas flow~$\bar{\flow}$ with
$\sgn(\bar{\flow}_\arc)=\sgn(\tilde{\flow}_\arc)$ and $\abs{\bar{\flow}_\arc}\geq\abs{\tilde{\flow}_\arc}$
for each~$\arc\in\arcs$.
\end{lemma}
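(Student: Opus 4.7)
The plan is to verify the three claims of the lemma in turn: that $\bar\pot$ defines a \emph{feasible} stationary gas flow, that the resulting $\bar{\flow}$ agrees in sign with $\tilde{\flow}$ arc by arc, and that $\abs{\bar{\flow}_\arc}\geq\abs{\tilde{\flow}_\arc}$ everywhere.

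Feasibility of $\bar\pot$ is immediate: since $\pot^1_\otherNode,\pot^2_\otherNode\in[\pot_{\min},\pot_{\max}]$ for every $\otherNode\in\nodes$, the average $\bar\pot_\otherNode$ lies in the same interval. The flow $\bar{\flow}$ is then defined arcwise by applying Weymouth's equation~\eqref{eq:flowpotential} to $\bar\pot$, and averaging~\eqref{eq:flowpotential} for $\flow^1$ and $\flow^2$ yields the identity that drives the rest of the proof,
\[
\bar{\flow}_\arc\,\abs{\bar{\flow}_\arc} \;=\; \tfrac{1}{2}\bigl(\flow^1_\arc\,\abs{\flow^1_\arc}+\flow^2_\arc\,\abs{\flow^2_\arc}\bigr).
\]

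For the sign claim I would exploit that $y\mapsto y\abs{y}$ is a strictly increasing odd function. A brief case split on the signs of $\flow^1_\arc$ and $\flow^2_\arc$ then shows that the right-hand side of the identity above has the same sign as $\flow^1_\arc+\flow^2_\arc=2\tilde{\flow}_\arc$: when the two terms share a sign this is immediate; when they have opposite signs, both expressions inherit the sign of whichever of $\flow^1_\arc,\flow^2_\arc$ is larger in absolute value.

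The magnitude claim is the main technical step. Writing $r_i\define\abs{\flow^i_\arc}$ and $s_i\define\sgn(\flow^i_\arc)$, and using $\abs{\bar{\flow}_\arc}^2=\abs{\bar{\flow}_\arc\,\abs{\bar{\flow}_\arc}}$, the inequality $\abs{\bar{\flow}_\arc}^2\geq\abs{\tilde{\flow}_\arc}^2$ becomes
\[
2\,\bigl|s_1 r_1^2+s_2 r_2^2\bigr| \;\geq\; (s_1 r_1+s_2 r_2)^2.
\]
If $s_1=s_2$, this reduces to $2(r_1^2+r_2^2)\geq(r_1+r_2)^2$, i.e.\ $(r_1-r_2)^2\geq 0$. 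If $s_1\neq s_2$, using $\abs{r_1^2-r_2^2}=\abs{r_1-r_2}(r_1+r_2)$, it reduces to $2(r_1+r_2)\geq\abs{r_1-r_2}$, which is trivial. I do not expect a real obstacle beyond keeping this case analysis tidy; once the cases are dispatched, the lemma is established.
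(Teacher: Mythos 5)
Your proof is correct, and it reaches the same conclusion by a genuinely different (more elementary) route. The paper also averages the potentials, but it phrases the key step as a property of the map $f(\sigma)=\sgn(\sigma)\sqrt{\abs{\sigma}}$ applied to the potential differences $\sigma^i=\pot^i_\node-\pot^i_\otherNode$: it proves $\sgn(f(\frac{\sigma^1+\sigma^2}{2}))=\sgn(\frac{f(\sigma^1)+f(\sigma^2)}{2})$ and $\abs{f(\frac{\sigma^1+\sigma^2}{2})}\geq\abs{\frac{f(\sigma^1)+f(\sigma^2)}{2}}$ via oddness and concavity of $f|_{\R_{\geq0}}$, with the mixed-sign case handled by chaining two concavity inequalities ($f(\frac{\sigma^1+\sigma^2}{2})\geq\frac12 f(\sigma^1+\sigma^2)$ and $f(\sigma^1+\sigma^2)\geq f(\sigma^1)+f(\sigma^2)$). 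You instead work with the inverse map $y\mapsto y\abs{y}$ on the flow side, observe that Weymouth's equation turns the potential average into the identity $\bar{\flow}_\arc\abs{\bar{\flow}_\arc}=\tfrac12(\flow^1_\arc\abs{\flow^1_\arc}+\flow^2_\arc\abs{\flow^2_\arc})$, and then dispose of both the sign and magnitude claims by squaring and reducing to $(r_1-r_2)^2\geq0$ and $2(r_1+r_2)\geq\abs{r_1-r_2}$. Your case analysis is complete (the degenerate cases $r_1=r_2$ and $r_i=0$ are harmless), and the feasibility observation is exactly right. What your version buys is that it needs no concavity at all, only polynomial arithmetic; what the paper's version buys is that its Observation isolates the structural property of $f$ that the authors later invoke again (strict concavity of the square root in the fixed-potential gadget of Sect.~\ref{subsec:fixed-potential}, and the generalization to arbitrary strictly concave $f$ in the final remark), whereas your squaring trick is specific to $f(z)=\sqrt{z}$.
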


\begin{proof}
By definition of~$\bar{\flow}$ and~$\flow^i$, $i=1,2$, we have
\begin{align*}
\bar{\flow}_\arc &=
\sgn(\bar\pot_\node-\bar\pot_\otherNode)
\sqrt{\abs{\bar\pot_\node-\bar\pot_\otherNode}}/\sqrt{\beta_\arc},\\
\flow^i_\arc &=\sgn(\pot^i_\node-\pot^i_\otherNode)
\sqrt{\abs{\pot^i_\node-\pot^i_\otherNode}}/\sqrt{\beta_\arc},
\end{align*}
for each arc $\arc=(\node,\otherNode)\in\arcs$. Moreover, by
definition of~$\bar\pot$, we get
$\bar\pot_\node-\bar\pot_\otherNode
=\bigl((\pot^1_\node-\pot^1_\otherNode)+(\pot^2_\node-\pot^2_\otherNode)\bigr)/2$.
The lemma thus follows from the next observation.
\end{proof}

\begin{observation}
Consider the function~$f:\R\to\R$ with 
$f(\sigma)=\sgn(\sigma)\sqrt{\abs{\sigma}}$. Then, for all~$\sigma^1$, $\sigma^2\in\R$,
\begin{align*}
\sgn\Big(f\Bigl(\frac{\sigma^1+\sigma^2}2\Bigr)\Big)
&=\sgn\left(\frac{f(\sigma^1)+f(\sigma^2)}2\right),\\
\Big\lvert f\Bigl(\frac{\sigma^1+\sigma^2}2\Bigr)\Big\rvert
&\geq\bigg\lvert\frac{f(\sigma^1)+f(\sigma^2)}2\bigg\rvert.
\end{align*}
\end{observation}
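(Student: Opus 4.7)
The plan is to exploit the fact that $f$ is odd, continuous, and strictly increasing, with $\sgn(f(\sigma))=\sgn(\sigma)$. I would first dispose of the sign equality and then reduce the magnitude inequality to a case analysis about the signs of $\sigma^1$ and $\sigma^2$.

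For the sign identity, the value $f((\sigma^1+\sigma^2)/2)$ has the sign of $\sigma^1+\sigma^2$, so it suffices to prove $\sgn(f(\sigma^1)+f(\sigma^2))=\sgn(\sigma^1+\sigma^2)$. Using that $f$ is odd and strictly increasing,
\[
f(\sigma^1)+f(\sigma^2)\geq 0 \iff f(\sigma^1)\geq f(-\sigma^2) \iff \sigma^1\geq -\sigma^2,
\]
and symmetrically for the strict and reverse inequalities. This gives the first claim.

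For the magnitude inequality I would use oddness to assume without loss of generality that $\sigma^1+\sigma^2\geq 0$, so that by the sign identity both quantities inside the absolute values are nonnegative, reducing the claim to $f\!\left(\tfrac{\sigma^1+\sigma^2}{2}\right)\geq \tfrac{f(\sigma^1)+f(\sigma^2)}{2}$. If both $\sigma^i\geq 0$ this is exactly concavity of $\sqrt{\cdot}$ on $[0,\infty)$, so the only genuine work is the mixed-sign case, which I would handle by WLOG $\sigma^1\geq 0\geq \sigma^2$ and substituting $u\define\sqrt{\sigma^1}$, $v\define\sqrt{-\sigma^2}$, with $u\geq v\geq 0$ by the assumption $\sigma^1+\sigma^2\geq 0$. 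The inequality then becomes
\[
\sqrt{\tfrac{u^2-v^2}{2}}\;\geq\;\tfrac{u-v}{2},
\]
which squares to $2(u-v)(u+v)\geq (u-v)^2$, trivially true for $u\geq v\geq 0$ (and an equality when $u=v$).

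The main obstacle is precisely this mixed-sign subcase: concavity of $\sqrt{\cdot}$ does not immediately apply across the sign change at $0$, because $f$ is concave on $[0,\infty)$ and convex on $(-\infty,0]$. The trick is that the $1/2$-homogeneity of $f$ converts the inequality into a one-line algebraic statement about $u,v\geq 0$. Alternatively, one can appeal to the $1/2$-homogeneity directly to rescale so that $\sigma^1+\sigma^2=2$ and reduce to showing $1\geq \tfrac12(\sqrt{1+t}-\sqrt{t-1})$ for $t\geq 1$, which follows from $\sqrt{1+t}-\sqrt{t-1}=2/(\sqrt{1+t}+\sqrt{t-1})\leq 2$; but the substitution version above is the cleanest way to write it.
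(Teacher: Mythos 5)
Your proof is correct, but in the decisive mixed-sign case it takes a genuinely different route from the paper's. Both arguments agree on the easy parts: the sign identity via oddness and strict monotonicity, and the same-sign case via concavity of $\sqrt{\cdot}$ on $[0,\infty)$. For $\sigma^1\geq 0\geq\sigma^2$ with $\sigma^1+\sigma^2\geq 0$, however, the paper stays entirely at the level of the structural properties of $f$ (odd, $f(0)=0$, concave and increasing on $\R_{\geq0}$): it chains the midpoint-concavity bound $f\bigl(\tfrac{\sigma^1+\sigma^2}{2}\bigr)\geq\tfrac12 f(\sigma^1+\sigma^2)$ with the subadditivity consequence $f(\sigma^1+\sigma^2)\geq f(\sigma^1)+f(\sigma^2)$. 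You instead exploit the explicit form of the square root via the substitution $u=\sqrt{\sigma^1}$, $v=\sqrt{-\sigma^2}$, reducing everything to the one-line algebraic inequality $2(u-v)(u+v)\geq(u-v)^2$. Your version is shorter and more self-contained for this particular $f$; the paper's abstract version is what supports its closing remark that the results extend verbatim to any potential-based flow model whose coupling function is odd with a strictly concave, increasing restriction to $\R_{\geq0}$ — your computation would have to be redone for each such $f$. One cosmetic point: when you drop the absolute values after assuming $\sigma^1+\sigma^2\geq0$, you should note explicitly that the right-hand side $f(\sigma^1)+f(\sigma^2)$ is then nonnegative precisely by the sign identity you just proved (you do invoke it, so this is fine, just worth stating in the final write-up).
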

 
\begin{proof}
Notice that~$f(-\sigma)=-f(\sigma)$ for all~$\sigma\in\R$ (in 
particular, $f(0)=0$), and $f|_{\R_{\geq0}}$ is non-negative,
strictly increasing, and concave. Therefore the statement is clear
for the cases that~$\sigma^1$ and~$\sigma^2$ are both non-negative or
both non-positive. 

It remains to consider the case $\sigma^1<0<\sigma^2$. The equality
statement on the signs is an immediate consequence of~$f$'s
properties noted above. By symmetry we may assume that 
$\abs{\sigma^1}\leq\sigma^2$ such that $\frac12(\sigma^1+\sigma^2)\geq0$
and thus $f\bigl(\frac12(\sigma^1+\sigma^2)\bigr)\geq0$. By concavity
of $f|_{\R_{\geq0}}$, we get two inequalities:
\begin{align*}
&f\Bigl(\frac{\sigma^1+\sigma^2}2\Bigr)
\geq \frac{f(0)+f(\sigma^1+\sigma^2)}2
=\frac{f(\sigma^1+\sigma^2)}2,
\end{align*}
\vspace{-5ex}
\begin{align*}
f(\sigma^1+\sigma^2)-f(\sigma^1)
&=f(-\abs{\sigma^1}+\sigma^2)+f(\abs{\sigma^1})\\
&\geq f(\sigma^2)+f(0)=f(\sigma^2).
\end{align*}
The latter inequality implies that
$f(\sigma^1+\sigma^2)\geq f(\sigma^1)+f(\sigma^2)$. Together with the
former inequality this yields the desired result.
\end{proof}

It follows from Lemma~\ref{lem:average-potential-flow} and~\eqref{eq:flowpotential} that by
increasing the~$\beta_\arc$ values individually for each 
arc~$\arc\in\arcs$, we arrive at a network where the 
$s$-$t$-flow~$\tilde\flow$ is a feasible stationary gas
$s$-$t$-flow induced by the node potentials
$\bar\pot \define \frac12(\pot^1+\pot^2)$.
More precisely, we need to set
$\tilde\beta_\arc
\define \beta_\arc \bar{\flow}_\arc^2/\tilde{\flow}_\arc^2
\geq\beta_\arc$.
Thus, by Theorem~\ref{thm:Braess}, the value of the stationary 
maximal feasible gas $s$-$t$-flow~$x^*$ in the network with original
values~$\beta_\arc$, $\arc\in\arcs$, is at least the value 
of~$\tilde\flow$, which is half the value of our feasible $2$-stage
gas $s$-$t$-flow~$(\flow^1,\flow^2)$. Summarizing, the value of the
feasible $2$-stage gas $s$-$t$-flow~$(x^*,x^*)$ is at least the value
of~$(\flow^1,\flow^2)$. This concludes the proof of 
Theorem~\ref{thm:max-2-stage-flow}.

\section{Examples and Counterexamples}
\label{sec:examples}

In this section we show that Theorem~\ref{thm:max-2-stage-flow} can
neither be generalized to the $k$-stage gas $s$-$t$-flow problem
for~$k\geq3$ nor to the $2$-stage gas $b$-flow problem.

\subsection{\texorpdfstring{Instationary $k$-stage gas $s$-$t$-flows for $k\geq3$}{Instationary k-stage gas s-t-flows for k >= 3}}

We present a network for the maximum $3$-stage gas $s$-$t$-flow problem where repeating the maximum feasible stationary
gas $s$-$t$-flow three times is not optimal. In order to develop the right
intuition for this instance, we first show that fixing the potentials
of nodes~$s$ and~$t$ to the same value does not keep us from sending
a positive amount of flow in a $3$-stage gas $s$-$t$-flow.

\begin{example}\label{ex:pump}
Consider a path network with nodes $\nodes=\{s,\node,\otherNode,t\}$,
arcs~$\arcs=\{(s,\node),(\node,\otherNode),(\otherNode,t)\}$, and
$\beta_\arc=1$, for all~$\arc\in\arcs$. Moreover,
$[\pot_{\min}, \pot_{\max}] = [0,4]$. In Fig.~\ref{fig:pump} we
present the potentials of a $3$-stage gas $s$-$t$-flow of
value~$2-\sqrt{2}>0$, where the potentials of~$s$ and~$t$ are fixed
to~$2$.
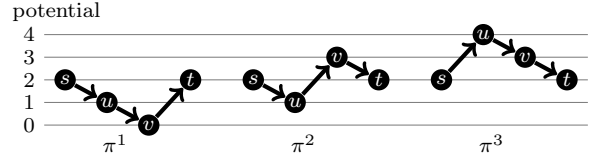
\begin{figure}[t]
\centering
\begin{tikzpicture}[auto,xscale=0.55,yscale=0.75]
\newcommand{\scale}{0.4}
\tikzstyle{node}=[circle,inner sep = 0pt,minimum size=0.8em,fill,color=black,text=white]
\tikzstyle{arc}=[ultra thick,->]
\foreach \i in {0,1,2,3,4} \draw [gray] (-0.5,\scale*\i) node [black,left] {\footnotesize$\i$} -- +(13,0);
\node at (-0.2,5*\scale) {\footnotesize potential};

\begin{scope}
\node [node] (s) at (0,2*\scale) {\footnotesize$s$};	
\node [node] (u) at (1,1*\scale) {\footnotesize$\node$};	
\node [node] (v) at (2,0*\scale) {\footnotesize$\otherNode$};	
\node [node] (t) at (3,2*\scale) {\footnotesize$t$};	
\draw [arc] (s) -- (u);
\draw [arc] (u) -- (v);
\draw [arc] (v) -- (t);
\node at (1.2,-0.8*\scale) {\footnotesize$\pot^1$};
\end{scope}

\begin{scope}[xshift=4.5cm]
\node [node] (s) at (0,2*\scale) {\footnotesize$s$};	
\node [node] (u) at (1,1*\scale) {\footnotesize$\node$};	
\node [node] (v) at (2,3*\scale) {\footnotesize$\otherNode$};	
\node [node] (t) at (3,2*\scale) {\footnotesize$t$};	
\draw [arc] (s) -- (u);
\draw [arc] (u) -- (v);
\draw [arc] (v) -- (t);
\node at (1.2,-0.8*\scale) {\footnotesize$\pot^2$};
\end{scope}

\begin{scope}[xshift=9cm]
\node [node] (s) at (0,2*\scale) {\footnotesize$s$};	
\node [node] (u) at (1,4*\scale) {\footnotesize$\node$};	
\node [node] (v) at (2,3*\scale) {\footnotesize$\otherNode$};	
\node [node] (t) at (3,2*\scale) {\footnotesize$t$};	
\draw [arc] (s) -- (u);
\draw [arc] (u) -- (v);
\draw [arc] (v) -- (t);
\node at (1.2,-0.8*\scale) {\footnotesize$\pot^3$};
\end{scope}
\end{tikzpicture}		
\caption{Node potentials of $3$-stage gas $s$-$t$-flow with
value $2-\sqrt{2}$ on the path network described in
Example~\ref{ex:pump}}
\label{fig:pump}
\end{figure}
Note that, on every arc, there is flow of value~$1$ in two of
the three stages and flow of value~$-\sqrt{2}$ in the remaining 
stage. In particular, the individual stationary gas flows of the
three stages are not $s$-$t$-flows but use the model's freedom
to buffer flow at intermediate nodes~$\node$ and~$\otherNode$
(cf.~Remark~\ref{remark:model}).
\end{example}

In Sect.~\ref{sec:max-2-stage-flow} we have turned a given $2$-stage
gas flow into a stationary gas flow by considering the average node
potentials~$\bar\pot$. Notice that this idea is completely useless
with respect to Example~\ref{ex:pump}. The average potential of any
node in the given $3$-stage gas flow is equal to~$2$
(cf.~Fig.~\ref{fig:pump}). In particular, $\bar\pot$ induces the (stationary)
zero flow.

In the next example, we use the intuition behind
Example~\ref{ex:pump} as a gadget to come up with a path network
where any maximum $k$-stage gas $s$-$t$-flow is instationary. More
precisely, we extend the path by adding two additional nodes, one on
the left and one on the right, together with arcs of high
resistance connecting them to the
corresponding ends of the previous path.

\begin{example}\label{ex:instationary-max-flow}
Consider a path network consisting of node 
set~$\nodes=\{s,s',\node,\otherNode,t',t\}$ and arc 
set~$\arcs=\{(s,s'),(s',\node),(\node,\otherNode),(\otherNode,t'),(t',t)\}$
with $\beta_\arc=1$ for all arcs~$\arc$, except $\beta_{(s,s')}=\beta_{(t',t)}=27+18\sqrt{2}$.
Moreover, $[\pot_{\min}, \pot_{\max}] = [0,4]$ as in Example~\ref{ex:pump}. In 
Fig.~\ref{fig:max-stat-s-t-flow}, we give a maximum feasible
stationary gas $s$-$t$-flow of value~$\approx0.193$.
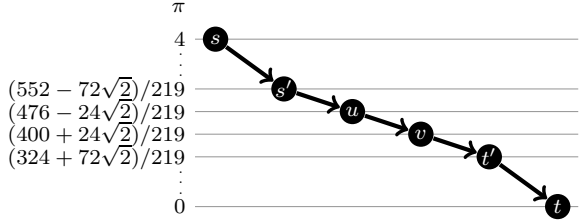
\begin{figure}[tb]
\centering
\begin{tikzpicture}[auto,xscale=0.9]
\newcommand{\scale}{0.3}
\tikzstyle{node}=[circle,inner sep = 0pt,minimum size=0.95em,fill,color=black,text=white]
\tikzstyle{arc}=[ultra thick,->]
\foreach \i/\l in {0.8/0,3/{(324+72\sqrt{2})/219},4/{(400+24\sqrt{2})/219},5/{(476-24\sqrt{2})/219},6/{(552-72\sqrt{2})/219},8.2/4}
	\draw [gray!80!white] (-0.3,\scale*\i) node [black,left] {\footnotesize$\l$} -- +(5.6,0);
\draw (-0.3,9.6*\scale) node [left] {\footnotesize$\pot$};
\draw (-0.33,2.2*\scale) node [left] {\tiny$\vdots$};
\draw (-0.33,7.4*\scale) node [left] {\tiny$\vdots$};

\node [node] (s) at (0,8.2*\scale) {\footnotesize $s$};
\node [node] (sp) at (1,6*\scale) {\footnotesize $s'$};	
\node [node] (u) at (2,5*\scale) {\footnotesize $\node$};	
\node [node] (v) at (3,4*\scale) {\footnotesize $\otherNode$};	
\node [node] (tp) at (4,3*\scale) {\footnotesize $t'$};	
\node [node] (t) at (5,0.8*\scale) {\footnotesize $t$};
\draw [arc] (s) -- (sp);
\draw [arc] (sp) -- (u);
\draw [arc] (u) -- (v);
\draw [arc] (v) -- (tp);
\draw [arc] (tp) -- (t);
\end{tikzpicture}		
\caption{Node potentials of a maximum feasible stationary gas
$s$-$t$-flow of
value~$\smash{\sqrt{\raisebox{0ex}[2ex][0ex]{\ensuremath{(76-48\sqrt{2})/219}}}}\approx0.193$
on the path network described in
Example~\ref{ex:instationary-max-flow}}
\label{fig:max-stat-s-t-flow}
\end{figure}
Repeating this flow three times yields a feasible $3$-stage gas 
$s$-$t$-flow of value~$\approx0.578$. There is, however, an
instationary solution achieving value~$2-\sqrt{2}\approx0.586$
which can be achieved as follows. Fix the potentials of node~$s$
to~$4$ and of node~$t$ to~$0$; for the remaining `inner' nodes, plug
in the solution from Example~\ref{ex:pump}, that is, fix the 
potentials of~$s'$ and~$t'$ to~$2$ and let the potentials of~$\node$
and~$\otherNode$ vary as in Fig.~\ref{fig:pump}.
\end{example}
 
\begin{remark}
The gap between the instationary solution and the optimal stationary
solution in Example~\ref{ex:instationary-max-flow} is apparently
tiny. With a simple trick we can, however, construct path networks
where the value of an instationary $3$-stage gas $s$-$t$-flow exceeds
the value of any stationary $3$-stage gas $s$-$t$-flow by an
arbitrarily large factor. Such networks can be obtained by replacing
the $s'$-$t'$-subnetwork in Example~\ref{ex:instationary-max-flow} by
a serial composition of~$\ell$ copies of this subnetwork. It is not
difficult to see that the value of a maximum feasible stationary gas
$s$-$t$-flow tends to zero when~$\ell$ tends to infinity. On the other
hand, the instationary $3$-stage gas $s$-$t$-flow of
value~$2-\sqrt{2}$ described in Example~\ref{ex:instationary-max-flow}
can be extended to the larger network by operating each of the~$\ell$
copies as depicted in Fig.~\ref{fig:pump}.
\end{remark}

Finally notice that the examples and results for the maximum $3$-stage gas
$s$-$t$-flow problem discussed in this section can be generalized in
a straightforward way to $k$-stages for~$k>3$.

\subsection{\texorpdfstring{Instationary $2$-stage gas $b$-flows}{Instationary 2-stage gas b-flows}}

We present a network with supplies and demands~$b\in\R^V$, where any
stationary $k$-stage gas $b$-flow requires a considerably larger
interval of node potentials than an instationary $k$-stage
gas $b$-flow.

\begin{example}\label{ex:b-flow}
For some fixed parameter~$0<\varepsilon<1$ consider the path network with
$2q+2$ nodes~$V=\{\node_0,\otherNode_0,\node_1,\otherNode_1,\dots,\node_q,\otherNode_q\}$
depicted in Fig.~\ref{fig:b-flow}.
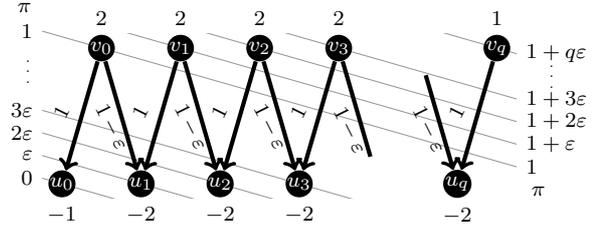
\begin{figure}[tb]
\centering
\begin{tikzpicture}[auto,xscale=0.52,yscale=0.9]
\tikzstyle{node}=[circle,inner sep = 0pt,minimum size=1em,fill,color=black,text=white]
\tikzstyle{arc}=[ultra thick,<-]

\begin{scope}
\clip (-1.4,-0.22) -- (-1.4,2.4) -- (0,2.4) -- (0,2.22) -- (13.35,2.22) -- (13.35,-0.22) -- cycle;
\foreach \i/\l in {0/0,1/\varepsilon,2/2\varepsilon,3/3\varepsilon,6.5/1,7.5/1+\varepsilon,8.5/1+2\varepsilon,9.5/1+3\varepsilon,11.5/1+q\varepsilon}
\draw [gray!80!white] (-0.5,1/12+4/12*\i) node [black,left] {\footnotesize$\l$} -- +(12,-2) node [black,right] {\footnotesize$\l$};
\draw (12.58-0.5,-1.89+1/12+4/12*10.5) node [right] {\tiny$\vdots$};
\draw (-0.55,0.1+1/12+4/12*4.75) node [left] {\tiny$\vdots$};
\end{scope}
\node at (-0.95,2.6) {\footnotesize$\pot$};
\node at (12.05,-0.1) {\footnotesize$\pot$};

\node [node,label=below:\footnotesize$-1$] (u0) at (2*0,0) {\footnotesize$\node_0$};
\node [node,label=above:\footnotesize$2$] (v0) at (1+2*0,2) {\footnotesize$\otherNode_0$};
\draw [arc] (u0) -- node [sloped,pos=0.6] {\footnotesize$1$} (v0);

\foreach \i in {1,2,3}{
\node [node,label=below:\footnotesize$-2$] (u\i) at (2*\i,0) {\footnotesize$\node_{\i}$};
\node [node,label=above:\footnotesize$2$] (v\i) at (1+2*\i,2) {\footnotesize$\otherNode_{\i}$};
\draw [arc] (u\i) -- node [sloped,pos=0.6] {\footnotesize$1$} (v\i);
}
\draw [arc] (u1) -- node [sloped,pos=0.2,xshift=1mm,yshift=1.5mm] {\footnotesize$1-\varepsilon$} (v0);
\draw [arc] (u2) -- node [sloped,pos=0.2,xshift=1mm,yshift=1.5mm] {\footnotesize$1-\varepsilon$} (v1);
\draw [arc] (u3) -- node [sloped,pos=0.2,xshift=1mm,yshift=1.5mm] {\footnotesize$1-\varepsilon$} (v2);
\draw [arc,-] (v3) -- node [swap,sloped,pos=0.95,xshift=1mm,yshift=1.5mm] {\footnotesize$1-\varepsilon$} +(0.8,-1.6);

\node [node,label=below:\footnotesize$-2$] (uq) at (2*5,0) {\footnotesize$\node_{q}$};
\node [node,label=above:\footnotesize$1$] (vq) at (1+2*5,2) {\footnotesize$\otherNode_{q}$};
\draw [arc] (uq) -- node [sloped,pos=0.6] {\footnotesize$1$} (vq);
\draw [arc,<-] (uq) -- node [sloped,pos=0.2,xshift=1mm,yshift=1.5mm] {\footnotesize$1-\varepsilon$} +(-0.8,1.6);
\end{tikzpicture}
\caption{Node potentials $\pot\in\R^\nodes$ inducing a stationary gas $b/2$-flow in the path network of
Example~\ref{ex:b-flow};
the numbers at arcs indicate the $\beta_\arc$-values,
the numbers at nodes $b_\node$-values.}
\label{fig:b-flow}
\end{figure}
There are arcs~$(\otherNode_i,\node_i)$ with
$\beta_{(\otherNode_i,\node_i)}=1$ for~$i=0,\dots,q$ and
arcs~$(\otherNode_i,\node_{i+1})$ for~$i=0,\dots,q-1$
with~$\beta_{(\otherNode_i,\node_{i+1})}=1-\varepsilon$.
The supplies and demands are~$b(\node_0)=-2$, $b(\node_i)=-4$
for~$i=1,\dots,q$, $b(\otherNode_i)=4$ for~$i=0,\dots,q-1$,
and~$b(\otherNode_q)=2$.
The stationary gas flow induced by the potentials in Fig.~\ref{fig:b-flow}
sends one unit of flow along each arc and thus fulfills
supplies and demands~$b/2$. It therefore yields the unique stationary $2$-stage
gas $b$-flow, and its range of node potentials is~$1+q\varepsilon$.

In Fig.~\ref{fig:b-flow2}, we present node potentials inducing an instationary $2$-stage gas
$b$-flow.
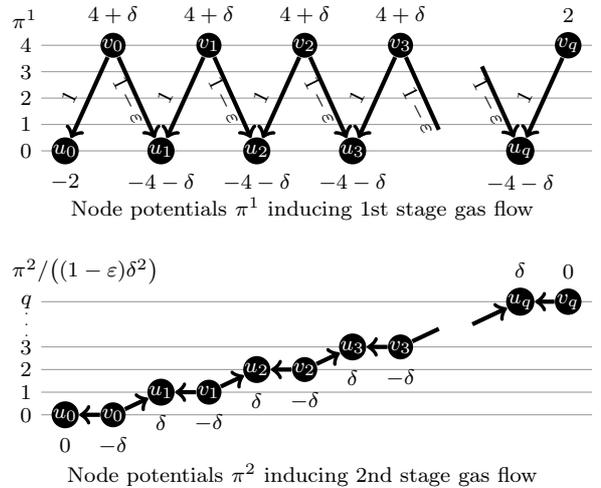
\begin{figure}[bt]
\centering
\begin{tikzpicture}[auto,xscale=0.63]
\tikzstyle{node}=[circle,inner sep = 0pt,minimum size=0.9em,fill,color=black,text=white]
\tikzstyle{arc}=[ultra thick,<-]

\begin{scope}[yscale=0.7]
	
\node at (4.95,-1.1) {\footnotesize Node potentials $\pot^1$ inducing 1st stage gas flow};

\foreach \i in {0,...,4}
	\draw [gray!80!white] (-0.5,0.5*\i) node [black,left] {\footnotesize$\i$} -- +(11.5,0);

\node[anchor=west] at (-1.3,2.5) {\footnotesize$\pot^1$};

\node [node,label=below:\footnotesize$-2$] (u0) at (2*0,0) {\footnotesize$\node_0$};
\node [node,label=above:{\footnotesize$4+\delta$}] (v0) at (1+2*0,2) {\footnotesize$\otherNode_0$};
\draw [arc] (u0) -- node [sloped,pos=0.6] {\footnotesize$1$} (v0);

\foreach \i in {1,...,3}{
\node [node,label=below:{\footnotesize$-4-\delta$}] (u\i) at (2*\i,0) {\footnotesize$\node_{\i}$};
\node [node,label=above:{\footnotesize$4+\delta$}] (v\i) at (1+2*\i,2) {\footnotesize$\otherNode_{\i}$};
\draw [arc] (u\i) -- node [sloped,pos=0.6] {\footnotesize$1$} (v\i);
}
\draw [arc] (u1) -- node [sloped,pos=0.2,xshift=1mm,yshift=1.5mm] {\footnotesize$1-\varepsilon$} (v0);
\draw [arc] (u2) -- node [sloped,pos=0.2,xshift=1mm,yshift=1.5mm] {\footnotesize$1-\varepsilon$} (v1);
\draw [arc] (u3) -- node [sloped,pos=0.2,xshift=1mm,yshift=1.5mm] {\footnotesize$1-\varepsilon$} (v2);
\draw [arc,-] (v3) -- node [swap,sloped,pos=0.55,xshift=6mm,yshift=1.0mm] {\footnotesize$1-\varepsilon$} +(0.8,-1.6);

\node [node,label=below:{\footnotesize$-4-\delta$}] (uq) at (2*5-0.5,0) {\footnotesize$\node_{q}$};
\node [node,label=above:\footnotesize$2$] (vq) at (2*5+0.5,2) {\footnotesize$\otherNode_{q}$};
\draw [arc] (uq) -- node [sloped,pos=0.6] {\footnotesize$1$} (vq);
\draw [arc,<-] (uq) -- node [sloped,pos=0.2,xshift=1mm,yshift=1.5mm] {\footnotesize$1-\varepsilon$} +(-0.8,1.6);	
\end{scope}

\begin{scope}[yshift=-3.5cm]
\foreach \i/\l in {0/0,1/1,2/2,3/3,5/q}
\draw [gray!80!white] (-0.5,0.3*\i) node [black,left] {\footnotesize$\l$} -- +(11.5,0);

\draw (-0.55,0.1+0.3*4) node [left] {\tiny$\vdots$};

\node at (4.95,-0.8) {\footnotesize Node potentials $\pot^2$ inducing 2nd stage gas flow};

\node[anchor=west] at (-1.3,1.9) {\footnotesize$\pot^2/\bigl((1-\varepsilon)\delta^2\bigr)$};

\begin{scope}
\node [node,label=below:\footnotesize$0$] (u0) at (0,0) {\footnotesize$\node_0$};
\node [node,label=below:{\footnotesize$-\delta$}] (v0) at (1,0) {\footnotesize$\otherNode_0$};
\draw [arc] (u0) -- (v0);

\foreach \i in {1,...,3}{
\node [node,label=below:{\footnotesize$\delta$}] (u\i) at (2*\i,0.3*\i) {\footnotesize$\node_{\i}$};
\node [node,label=below:{\footnotesize$-\delta$}] (v\i) at (1+2*\i,0.3*\i) {\footnotesize$\otherNode_{\i}$};
\draw [arc] (u\i) -- (v\i);
}
\draw [arc] (u1) -- (v0);
\draw [arc] (u2) -- (v1);
\draw [arc] (u3) -- (v2);
\draw [arc,-] (v3) -- +(0.8,0.24);

\node [node,label=above:{\footnotesize$\delta$}] (uq) at (2*5-0.5,5*0.3) {\footnotesize$\node_{q}$};
\node [node,label=above:\footnotesize$0$] (vq) at (2*5+0.5,5*0.3) {\footnotesize$\otherNode_{q}$};
\draw [arc] (uq) -- (vq);
\draw [arc,<-] (uq) -- +(-1,-0.3);
\end{scope}
\end{scope}

\end{tikzpicture}
\caption{Node potentials~$\pot^1$ and $\pot^2$ inducing an
instationary $2$-stage gas $b$-flow in
the network of Example~\ref{ex:b-flow}; here
$\delta \define 2/\sqrt{1-\varepsilon}-2\in\theta(\varepsilon)$
for~$\varepsilon\to0$.}
\label{fig:b-flow2}
\end{figure}	
Notice that, by choice of the node potentials~$\pot^1$, the
first-stage gas flow~$x^1$ overfulfills the supplies at
nodes~$\otherNode_0,\dots,\otherNode_{q-1}$ and the demands at
nodes~$\node_1,\dots,\node_q$ slightly by $\delta \define 2/{\sqrt{1-\varepsilon}}-2\in\theta(\varepsilon)$ for~$\varepsilon\to0$.
This is compensated for in the second stage (cf.~Remark~\ref{remark:model}). Overall, the range of
node potentials has size~$\max\{4,q(1-\varepsilon)\delta^2\}$ (see Fig.~\ref{fig:b-flow2}). If
we set~$\varepsilon \define 1/\sqrt{q}$ and let~$q$ go to infinity, the
node potentials are bounded by a constant. For the stationary
$2$-stage gas $b$-flow (see Fig.~\ref{fig:b-flow}), however, the size
of the range of node potentials is~$1+\sqrt{q}$ and thus unbounded.
\end{example}

\section{Complexity Results}
\label{sec:complexity}

We finally prove the following hardness result.

\begin{theorem}
\label{thm:NP-hard}
For a given network with supplies and demands~$b\in\R^V$ and potential interval~$[\pot_{\min},\pot_{\max}]$,
it is strongly NP-hard to decide whether there exists a feasible $2$-stage gas $b$-flow.
\end{theorem}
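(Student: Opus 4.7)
The plan is to reduce from a strongly NP-hard problem such as 3-SAT to the $2$-stage gas $b$-flow feasibility problem. The source of hardness must lie in the freedom of choosing the split $b_\node = b^1_\node + b^2_\node$ at each node: if this split were fixed, both stages would decouple into independent stationary feasibility problems, solvable in polynomial time via the convex program~\eqref{eq:ConvexProblem}. My reduction would therefore use the per-node splitting to encode a Boolean assignment of the input formula, exploiting the potential bounds $[\pot_{\min},\pot_{\max}]$ to force the split into a discrete set of options.

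For each variable $x_i$, I would build a variable gadget---a small subnetwork with carefully tuned resistances $\beta_\arc$ and internal balances---such that $[\pot_{\min},\pot_{\max}]$ admits essentially only two feasible $2$-stage splittings, corresponding to $x_i$ being set to true or false. Example~\ref{ex:b-flow} is the natural prototype: over-serving some demands in stage~$1$ and compensating in stage~$2$ shrinks the required potential range, and the goal is to sharpen this mechanism so that exactly two ``over-serve patterns'' fit within the interval while all intermediate splittings violate a bound. For each clause $C=\ell_1\vee\ell_2\vee\ell_3$, I would attach a clause gadget to its three variable gadgets through high-resistance connector arcs (in the spirit of Example~\ref{ex:instationary-max-flow}), constructed so that the clause gadget admits a feasible splitting iff in at least one stage at least one of its three literals delivers the ``true'' signal. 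Combining all gadgets yields a single network whose 2-stage $b$-flow is feasible iff the formula is satisfiable; choosing all $\beta_\arc$-values, node balances, and potential endpoints polynomially in the formula size then gives \emph{strong} rather than merely weak NP-hardness.

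The main obstacle I anticipate is that Weymouth's equation~\eqref{eq:flowpotential} couples potentials globally, so no gadget can be truly isolated from the rest of the network. Making connector arcs resistant enough to quarantine each gadget's internal pressure choices---while still allowing sufficient flow to communicate literal values to the clause gadgets---will require a multi-scale choice of $\beta$-values and a robustness analysis showing that residual cross-gadget perturbations do not push any gadget out of its intended discrete state. Designing gadgets whose two intended states are separated by a polynomially bounded gap in potential space, while intermediate configurations are simultaneously excluded by the same polynomially bounded interval, is the central technical difficulty; the nonlinearity of the square-root/cubic relationship between potential differences and flows, exploited already in Lemma~\ref{lem:average-potential-flow} and in Example~\ref{ex:b-flow}, is what makes such a sharp separation plausible.
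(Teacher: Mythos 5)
Your high-level diagnosis is right---the hardness does come from the freedom to split $b=b^1+b^2$ per node, and the discrete choices are enforced by the potential bounds together with the strict concavity of the square root---but the proposal stops exactly where the actual work begins, and two of your design decisions point away from what makes the construction go through. First, the obstacle you flag as central (global coupling of potentials via Weymouth's equation, to be tamed by ``multi-scale'' resistances and a robustness analysis) is not how the paper isolates gadgets, and it is doubtful that a quarantine-by-high-resistance argument can be made exact: with equality balance constraints there is no slack to absorb ``residual cross-gadget perturbations.'' The paper instead builds a \emph{fixed-potential gadget} (Sect.~\ref{subsec:fixed-potential}): attaching a supply node and a demand node with balances $\pm2$ and resistances $\pot_{\max}-\pot^*$ and $\pot^*-\pot_{\min}$ forces, via the concavity inequality $\sqrt{a}+\sqrt{b}\le 2\sqrt{(a+b)/2}$ holding with equality only when $a=b$, the attached node's potential to equal $\pot^*$ in \emph{both} stages. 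Once boundary potentials are pinned exactly, gadgets decouple completely and no perturbation analysis is needed. The second key construction you are missing is the \emph{binary decision gadget} (Sect.~\ref{subsec:binary-gadget}), where two fixed-potential neighbors with balances $-2$ and $-2/\sqrt5$ force a node's potential pair to be either $(1,1)$ or $\{0,4\}$; your variable gadget ``sharpening Example~\ref{ex:b-flow}'' is left entirely unspecified, and that example by itself only exhibits a quantitative range gap, not a clean two-state dichotomy.

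Second, reducing from 3-SAT rather than from an exact-cover-type problem creates an additional difficulty you do not address. Because gas $b$-flows impose \emph{equality} balance constraints, the total flow arriving at a clause gadget is a fixed function of how many of its literals are ``true''; making one, two, or three true literals all feasible while zero is infeasible requires the clause gadget to absorb a variable amount of flow, which is precisely the kind of slack the model does not naturally provide. The paper avoids this by reducing from Exact Cover By 3-Sets: each element node's balance is satisfiable if and only if \emph{exactly one} covering set is switched on, which matches the equality constraints perfectly (changing a decision node from off to on shifts the two-stage flow on each incident arc by exactly $1$ after calibrating the $\beta$ values). So the proposal is not wrong in spirit, but as it stands it is a program whose two essential gadgets are missing and whose target problem makes the clause construction strictly harder than necessary.
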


In order to prove this result, we first introduce several gadgets,
using our insights from Sect.~\ref{sec:max-2-stage-flow}.

\subsection{Nodes with fixed potential}
\label{subsec:fixed-potential}

Our first gadget is used to fix the potential of some node~$\node$
in any feasible $2$-stage gas $b$-flow to a given 
value~$\pot^*_\node$ with $\pot_{\min}<\pot^*_\node<\pot_{\max}$. To
this end, we introduce two new nodes~$\node^s$ and~$\node^t$ whose
only incident arcs are~$\arc^s=(\node^s,\node)$ 
and~$\arc^t=(\node,\node^t)$, respectively; see 
Fig.~\ref{fig:gadget1}. 
\begin{figure}[t]
\centering
\begin{tikzpicture}[auto,xscale=0.75]
\tikzstyle{node}=[circle,inner sep = 0pt,minimum size=1em,fill,color=black,text=white]
\tikzstyle{arc}=[ultra thick,->]
\node [node] (u) at (0,0) {\footnotesize$\node$};	
\node [node,label=above:\footnotesize${b_{\node^s}=2}$] (s) at (-4,0) {\footnotesize$\node^s$};	
\node [node,label=above:\footnotesize${b_{u^t}=-2}$] (t) at (4,0) {\footnotesize$\node^t$};
\draw [arc] (s) to node {\footnotesize$\arc^s$} node [swap] {\footnotesize$\beta_{\arc^s}=\pot_{\max}-\pot^*_\node$} (u);
\draw [arc] (u) to node {\footnotesize$\arc^t$} node [swap] {\footnotesize$\beta_{\arc^t}=\pot^*_\node-\pot_{\min}$} (t);
\end{tikzpicture}
\caption{Gadget fixing $\node$'s potential to given value~$\pot^*_\node$}
\label{fig:gadget1}
\end{figure}
Moreover, we set
$\beta_{\arc^s} \define \pot_{\max}-\pot^*_\node$, $b_{\node^s} \define 2$,
$\beta_{\arc^t} \define \pot^*_\node-\pot_{\min}$, and $b_{\node^t} \define -2$.
By construction, the supply and demand at~$\node^s$ and~$\node^t$,
respectively, can be satisfied by a $2$-stage gas flow if the node
potentials are set to $\pot^1_{\node^s}=\pot^2_{\node^s}=\pot_{\max}$, $\pot^1_\node=\pot^2_\node=\pot^*_\node$, and
$\pot^1_{\node^t}=\pot^2_{\node^t}=\pot_{\min}$.

\begin{lemma}
In any feasible $2$-stage gas flow satisfying the supply and demand
at~$\node^s$ and~$\node^t$, respectively, node $\node$'s potential
satisfies $\pot^1_\node=\pot^2_\node=\pot^*_\node$.
\end{lemma}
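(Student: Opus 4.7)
The plan is to combine flow conservation at the two boundary nodes $\node^s$ and $\node^t$ with Weymouth's equation~\eqref{eq:flowpotential} and the potential bounds $\pot_{\min}\leq\pot^i_\otherNode\leq\pot_{\max}$ in order to pin down the potential at $\node$ in both stages simultaneously. Since $\arc^s$ is the only arc incident to $\node^s$ and the accumulated supply is $b_{\node^s}=2$, flow conservation forces $\flow^1_{\arc^s}+\flow^2_{\arc^s}=2$; analogously $\flow^1_{\arc^t}+\flow^2_{\arc^t}=2$. Plugging $\pot^i_{\node^s}\leq\pot_{\max}$ into~\eqref{eq:flowpotential} and using the strict monotonicity of $y\mapsto y\abs{y}$, I would derive the pointwise upper bound
\[
\flow^i_{\arc^s}\leq\sqrt{(\pot_{\max}-\pot^i_\node)/(\pot_{\max}-\pot^*_\node)},
\]
and symmetrically, from $\pot^i_{\node^t}\geq\pot_{\min}$,
\[
\flow^i_{\arc^t}\leq\sqrt{(\pot^i_\node-\pot_{\min})/(\pot^*_\node-\pot_{\min})}.
\]
Both bounds are valid regardless of the sign of the flow, since their right-hand sides are non-negative.

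Next, I would apply Jensen's inequality, exploiting the strict concavity of $\sqrt{\cdot}$. Writing $\bar{p}\define(\pot^1_\node+\pot^2_\node)/2$, summing the two upper bounds on $\flow^i_{\arc^s}$ and invoking concavity yields
\[
2=\flow^1_{\arc^s}+\flow^2_{\arc^s}\leq 2\sqrt{(\pot_{\max}-\bar{p})/(\pot_{\max}-\pot^*_\node)},
\]
which rearranges to $\bar{p}\leq\pot^*_\node$. The analogous chain for $\arc^t$ gives $\bar{p}\geq\pot^*_\node$, hence $\pot^1_\node+\pot^2_\node=2\pot^*_\node$.

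Finally, with $\pot^1_\node+\pot^2_\node=2\pot^*_\node$ the displayed chain must be everywhere tight, and strict concavity of $\sqrt{\cdot}$ forces the Jensen step to be tight only when $\pot^1_\node=\pot^2_\node$; together with the mean condition this gives $\pot^1_\node=\pot^2_\node=\pot^*_\node$, as claimed. The only mildly subtle point is to confirm that the upper bounds hold irrespective of the sign of the stage flows; this is immediate from the non-negativity of the square roots, after which the remainder is a short and clean application of strict concavity with no heavy algebra.
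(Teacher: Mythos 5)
Your proof is correct and follows essentially the same route as the paper's: bound each stage flow by assuming the extreme potential at the auxiliary node, apply concavity of the square root to relate the sum to the average potential $(\pot^1_\node+\pot^2_\node)/2$, squeeze that average from both sides via the two arcs, and finish by tightness of the strictly concave Jensen step. The only difference is cosmetic (you state the bounds pointwise per stage before summing, and you make the sign-independence of the bounds explicit, which the paper leaves implicit).
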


\begin{proof}
In order to satisfy the supply at~$\node^s$, the total flow on
arc~$\arc^s$ must sum up to~$b_{\node^s}=2$. Thus, the node potentials~$\pot^1$
and~$\pot^2$ must satisfy
\begin{align}
\begin{split}
2&=\flow^1_{\arc^s}+\flow^2_{\arc^s}
\leq\frac{\sqrt{\pot_{\max}-\pot^1_\node}
+\sqrt{\pot_{\max}-\pot^2_\node}}{\sqrt{\pot_{\max}-\pot^*_\node}}\\
&\leq2\frac{\sqrt{\pot_{\max}-(\pot^1_\node+\pot^2_\node)/2}}{\sqrt{\pot_{\max}-\pot^*_\node}}.
\end{split}
\label{eq:lem_lower_bd}
\end{align}
Here, the first inequality holds since the flow on arc~$\arc^s$ is
maximal if~$\pot^1_{\node^s}=\pot^2_{\node^s}=\pot_{\max}$. The
second inequality follows from the concavity of the square root
function. Notice that, in order for the right hand side expression
to be at least~$2$, the average
potential~$(\pot^1_\node+\pot^2_\node)/2$ must not
exceed~$\pot^*_\node$. Using an analogous argument for the total
flow on arc~$\arc^t$, it can be shown that 
the average
potential~$(\pot^1_\node+\pot^2_\node)/2$ must not fall 
below~$\pot^*_\node$. Thus the aver\-age
potential must equal~$\pot^*_\node$. As a con\-se\-quen\-ce, equality
holds in~\eqref{eq:lem_lower_bd}
which, by
strict concavity of the square root function,
implies~$\pot^1_\node=\pot^2_\node=\pot^*_\node$.
\end{proof}

\subsection{Binary decision nodes}
\label{subsec:binary-gadget}

Our second gadget is used to create a node~$\node$ to model a binary
decision. More precisely, there are two possibilities: either~$\pot^1_\node=\pot^2_\node=1$ or it must attain the two 
values~$\pot_{\min}=0$ and~$\pot_{\max}=4$, that is, $\{\pot^1_\node,\pot^2_\node\}=\{0,4\}$.
With this end in view, we introduce two additional nodes~$\otherNode$
and~$\thirdNode$ with fixed potentials~$\pot^*_\otherNode=0$,
$\pot^*_\thirdNode=\frac45$ and balances~$b_\otherNode=-2$
and~$b_\thirdNode=-2/\sqrt{5}$. Moreover, nodes~$\otherNode$ 
and~$\thirdNode$ are connected to~$\node$ by the
arcs~$\arc^\otherNode=(\node,\otherNode)$ 
and~$\arc^\thirdNode=(\node,\thirdNode)$
with~$\beta_{\arc^\otherNode}=\beta_{\arc^\thirdNode}=1$. Finally, we
set~$b_\node \define -(b_\otherNode+b_\thirdNode)=2+2/\sqrt{5}$; see 
Fig.~\ref{fig:gadget2}.
\begin{figure}[t]
\centering
\begin{tikzpicture}[auto,xscale=0.9,yscale=0.75]
\tikzstyle{node}=[circle,inner sep = 0pt,minimum size=1em,fill,color=black,text=white]
\tikzstyle{arc}=[ultra thick,->]
\node [node,label=left:\footnotesize${b_\node=2+2/\sqrt{5}}$] (u) at (0,0) {\footnotesize$\node$};	
\node [node,label=right:\footnotesize${b_{\otherNode}=-2}$,label=above:\footnotesize${\pot^*_\otherNode=0}$] (v) at (3,1) {\footnotesize$\otherNode$};	
\node [node,label=right:\footnotesize${b_{\thirdNode}=-2/\sqrt{5}}$,label=above:\footnotesize${\pot^*_\thirdNode=\frac45}$] (w) at (3,-1) {\footnotesize$\thirdNode$};
\draw [arc] (u) to node [sloped,pos=0.6] {\footnotesize$\arc^\otherNode$} node [sloped,swap,pos=0.25] {\footnotesize$\beta_{\arc^\otherNode}=1$} (v);
\draw [arc] (u) to node [sloped,pos=0.5] {\footnotesize$\arc^\thirdNode$} node [sloped,swap,pos=0.75] {\footnotesize$\beta_{\arc^\thirdNode}=1$} (w);
\end{tikzpicture}
\caption{Binary decision gadget with exactly two possibilities for $\node$'s potential: $\pot^1_\node=\pot^2_\node=1$
or~$\{\pot^1_\node,\pot^2_\node\}=\{0,4\}$}
\label{fig:gadget2}
\end{figure}
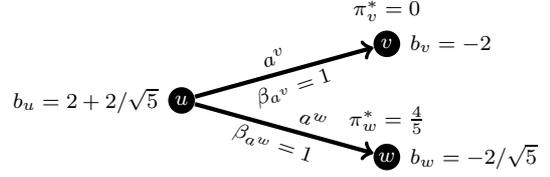

In order to satisfy the demands at nodes~$\otherNode$
and~$\thirdNode$ in a $2$-stage gas flow, the
potentials~$\pot_\node^1$ and~$\pot_\node^2$ need to satisfy the
following equations:
\begin{align}
\begin{split}
2=&\sgn(\pot_\node^1)\sqrt{\abs{\pot_\node^1}}+\sgn(\pot_\node^2)\sqrt{\abs{\pot_\node^2}},\\
\frac2{\sqrt{5}}=&\sgn(\pot_\node^1-\tfrac45)\sqrt{\abs{\pot_\node^1-\tfrac45}}\\
                  &+\sgn(\pot_\node^2-\tfrac45)\sqrt{\abs{\pot_\node^2-\tfrac45}}.
\end{split}\label{eq:binary-decision}
\end{align}
It is straightforward to verify that the only solutions (up to symmetry) to~\eqref{eq:binary-decision} are $\pot^1_\node=\pot^2_\node=1$
and~$\{\pot^1_\node,\pot^2_\node\}=\{0,4\}$.

\subsection{Reduction from Exact Cover By 3-Sets}

We prove Theorem~\ref{thm:NP-hard} via a reduction of the NP-complete problem 
Exact Cover By 3-Sets (X3C): 
the input is a finite set $X$ of cardinality~$\abs{X}=3q$, and a family of
subsets~$\mathcal{C}\subseteq 2^X$ with~$\abs{C}=3$ for all~$C\in\mathcal{C}$.
The question is whether there is a sub-family~$\mathcal{C}'\subseteq\mathcal{C}$
with~$\bigcup_{C\in\mathcal{C}'}C=X$ and~$\abs{\mathcal{C}'}=q$.

\begin{proof}[Proof of Theorem~\ref{thm:NP-hard}.]
Given an instance of X3C, we construct an instance of the $2$-stage
gas $b$-flow problem as follows. Set~$\pot_{\min} \define 0$ 
and~$\pot_{\max} \define 4$. For each element~$x\in X$, we introduce two
nodes~$\node^x$ and~$\otherNode^x$ with fixed 
potentials~$\pot^*_{\node^x} \define \frac{16}{25}$ and~$\pot^*_{\otherNode^x}=1$
(see gadget in Fig.~\ref{fig:gadget1}). 
For each~$C\in\mathcal{C}$, we introduce a binary decision node~$\node^C$
with~$\pot^1_{\node^C}=\pot^2_{\node^C}=1$
or~$\{\pot^1_{\node^C},\pot^2_{\node^C}\}=\{0,4\}$ (see gadget in
Fig.~\ref{fig:gadget2}). We say that node~$\node^C$ is \emph{off} if
$\pot^1_{\node^C}=\pot^2_{\node^C}=1$, otherwise it is \emph{on}.
Finally, for each~$C\in\mathcal{C}$ and each of the three~$x\in C$,
we introduce two arcs~$(\node^x,\node^C)$ and~$(\node^C,\otherNode^x)$.
By construction, if node~$\node^C$ is off,
\begin{align*}
\flow_{(\node^x,\node^C)}^1+\flow_{(\node^x,\node^C)}^2
&=-2\frac{\sqrt{1-\frac{16}{25}}}{\sqrt{\beta_{(\node^x,\node^C)}}}
=\frac{-\frac65}{\sqrt{\beta_{(\node^x,\node^C)}}}
\end{align*}
and $\flow_{(\node^C,\otherNode^x)}^1+\flow_{(\node^C,\otherNode^x)}^2=0$.
If node~$\node^C$ is on,
\begin{align*}
\flow_{(\node^x,\node^C)}^1+\flow_{(\node^x,\node^C)}^2
&=\frac{\frac45-\frac25\sqrt{21}}{\sqrt{\beta_{(\node^x,\node^C)}}}
\end{align*}
and $\flow_{(\node^C,\otherNode^x)}^1+\flow_{(\node^C,\otherNode^x)}^2 =(\sqrt{3}-1)/{\sqrt{\beta_{(\node^C,\otherNode^x)}}}$.
Thus, if we set $\beta_{(\node^x,\node^C)} \define(2-\tfrac25\sqrt{21})^2$, $\beta_{(\node^C,\otherNode^x)} \define (\sqrt{3}-1)^2$,
then changing the state of~$\node$ from off to on increases the
total flow along arc~$(\node^x,\node^C)$ and along 
arc~$(\node^C,\otherNode^x)$ by~$1$, leaving the flow balance at
node~$\node^C$ unchanged.

The idea of the reduction is that the demands of nodes~$\node^x$
and~$\otherNode^x$ are satisfied if and only if exactly one 
node~$\node^C$ with~$x\in C$ is on. For all~$x\in X$ and~$C\in\mathcal{C}$, let 
\begin{align*}
b_{\node^x}
&\define 1-\frac{\frac65\abs{\{C\in\mathcal{C}\mid x\in C\}}}{2-\tfrac25\sqrt{21}}, \ \
b_{\otherNode^x} \define -1,\\
b_{\node^C}&\define 3\frac{\frac65}{2-\tfrac25\sqrt{21}}+\bigl(2+2/\sqrt{5}\bigr),
\end{align*}
where the term~$2+2/\sqrt{5}$ stems from the decision
node gadget (see Fig.~\ref{fig:gadget2}). By construction of the reduction, there is a one-to-one
correspondence between feasible solutions to the X3C instance and
feasible solutions to the $2$-stage gas $b$-flow instance.
\end{proof}

\begin{remark}
In view of the fact that, due to the irrational numbers involved,
stationary and instationary gas flows can only be approximately computed
anyway, the significance of Theorem~\ref{thm:NP-hard} might seem
questionable at first glance. Notice, however, that due to continuity of
all functions involved, the gadgets in the proof are robust
toward small changes of numbers. In particular, it is even NP-hard 
to decide whether there exists an almost feasible $2$-stage gas flow
approximately fulfilling supplies and demands~$b$.
\end{remark}

It is also interesting to compare the hardness result of Theorem~\ref{thm:NP-hard} to related hardness
results for flows over time which constitute a time-dependent variant
of classical network flows; see, \eg the survey article~\cite{Skutella-Korte09}.
Most flow over time problems are only weakly NP-hard, if not polynomially
solvable. In particular, they can be solved efficiently as long as the
number of discrete time steps is polynomially bounded in the input size.
In contrast, our instationary gas $b$-flows are strongly NP-hard already
for only two time steps. This is mainly due to the non-convexity of the
square root function describing the relationship between flows and node
potentials.

\begin{remark}
We would finally like to point out that the results in this paper are meaningful beyond
the area of gas transport. The presented observations can be generalized to
potential-based flow models such as those considered in~\cite{GrossPfetschScheweEtAl2017},
as long as the function~$f$ in
$x_{(\node,\otherNode)}=\sgn(\pot_\node-\pot_\otherNode)f(|\pot_\node-\pot_\otherNode|/\beta_{(\node,\otherNode)})$
is strictly concave (for gas flows, $f(z)=\sqrt{z}$).	
\end{remark}

\paragraph{Acknowledgements.} We acknowledge funding through the DFG CRC/TRR~154,
Subprojects~A01 and A007. The last author is supported by the Einstein Foundation Berlin.

\bibliographystyle{elsarticle-num} 
\bibliography{InstationaryGasFlows}

\begin{thebibliography}{1}
\expandafter\ifx\csname url\endcsname\relax
  \def\url#1{\texttt{#1}}\fi
\expandafter\ifx\csname urlprefix\endcsname\relax\def\urlprefix{URL }\fi
\expandafter\ifx\csname href\endcsname\relax
  \def\href#1#2{#2} \def\path#1{#1}\fi

\bibitem{Koch_et_al:2015}
T.~Koch, B.~Hiller, M.~E. Pfetsch, L.~Schewe (Eds.), Evaluating Gas Network
  Capacities, SIAM-MOS series on Optimization, SIAM, Philadelphia, PA, USA,
  2015.

\bibitem{Weymouth1912}
T.~R. Weymouth, Problems in natural gas engineering, Transactions of the
  American Society of Mechanical Engineers 34 (1912) 185--231.

\bibitem{CCHKB78}
M.~Collins, L.~Cooper, R.~Helgason, J.~Kennington, L.~LeBlanc, Solving the pipe
  network analysis problem using optimization techniques, Management Sci.
  24~(7) (1978) 747--760.
\newblock \href {http://dx.doi.org/10.1287/mnsc.24.7.747}
  {\path{doi:10.1287/mnsc.24.7.747}}.

\bibitem{Maugis1977}
J.~J. Maugis, Etude de r{\'e}seaux de transport et de distribution de fluide,
  RAIRO Oper. Res. 11 (1977) 243--248.

\bibitem{CalvertKeady1993}
B.~Calvert, G.~Keady, Braess's paradox and power-law nonlinearities in
  networks, J. Austral. Math. Soc., Ser. B 35 (1993) 1--22.

\bibitem{GrossPfetschScheweEtAl2017}
M.~Gro{\ss}, M.~E. Pfetsch, L.~Schewe, M.~Schmidt, M.~Skutella, Algorithmic
  results for potential-based flows: Easy and hard cases, Tech. rep., FAU
  Erlangen-N{\"u}rnberg (2017).

\bibitem{Skutella-Korte09}
M.~Skutella, An introduction to network flows over time, in: W.~Cook,
  L.~Lov{\'a}sz, J.~Vygen (Eds.), Research Trends in Combinatorial
  Optimization, Springer, 2009, pp. 451--482.

\end{thebibliography}

\end{document}